\newtheorem{thm}{Theorem}[section]
\newtheorem{cor}[thm]{Corollary}
\newtheorem{defn}[thm]{Definition}
\newtheorem{rem}[thm]{Remark}
\newtheorem{ex}[thm]{Example}
\numberwithin{equation}{section}
\def\bL{{\mathbb L}}
\def\Lbb{{\mathbb L}}
\def\Zbb{{\mathbb Z}}
\def\Qbb{{\mathbb Q}}
\def\Cbb{{\mathbb C}}
\def\Pbb{{\mathbb P}}
\def\Abb{{\mathbb A}}
\def\A{{\mathbb A}}
\def\C{{\mathbb C}}
\renewcommand{\P}{{\mathbb P}}
\def\cR{{\mathcal R}}
\def\Var{{\rm Var}}
\title{Graph hypersurfaces and a dichotomy in the Grothendieck ring}
\author{Paolo Aluffi}
\author{Matilde Marcolli}
\address{Department of Mathematics \\ 
Florida State University \\
Tallahassee, FL 32306, USA}
\email{aluffi\@@math.fsu.edu}
\address{Department of Mathematics  \\
California Institute of Technology \\ 
Pasadena, CA 91125, USA}
\email{matilde\@@caltech.edu}
\begin{document}
\maketitle

\begin{abstract}
The subring of the Grothendieck ring of varieties generated 
by the graph hypersurfaces of quantum field theory maps to  
the monoid ring of stable birational equivalence classes of varieties.
We show that the image of this map is the copy of $\Zbb$ generated by
the class of a point.
Thus, the span of the graph hypersurfaces in the Grothendieck
ring is nearly killed by setting the Lefschetz motive $\Lbb$ to zero, while it 
is known that graph hypersurfaces generate the Grothendieck ring
over a localization of $\Zbb[\Lbb]$ in which $\Lbb$ becomes invertible.
In particular, this shows that the graph hypersurfaces do {\em not\/} generate 
the Grothendieck ring prior to localization.

The same result yields some information on the mixed Hodge
structures of graph hypersurfaces, in the form of a constraint on the 
terms in their Deligne-Hodge polynomials.
\end{abstract}

\section{Introduction}

The interplay between perturbative quantum field theory and the theory
of motives of algebraic varieties has been extensively studied in recent
years, in particular in terms of the algebro-geometric and motivic properties of the
{\em graph hypersurfaces} associated to Feynman graphs of scalar
quantum field theories.

In particular, one of the central results in the field is the main theorem of
\cite{BeBro}, which shows that graph hypersurfaces can be arbitrarily 
complicated from the motivic viewpoint: their affine complements are, 
{\em `from the standpoint of their zeta functions, the most general schemes 
possible'\/} (\cite{BeBro}, p.~149). In rough terms, this is proven by showing 
that graph hypersurfaces generate the Grothendieck ring $K(\Var)$ of 
varieties. More precisely
(Theorem~0.6 in \cite{BeBro}), graph hypersurfaces generate $S^{-1}
K(\Var)$ as a module over the ring $S^{-1}\Zbb[\Lbb]$,
where $S$ is the (saturated) multiplicative subset of $\Zbb[\Lbb]$ generated
by $\Lbb^n-\Lbb$ for $n>1$, with $\Lbb=[\Abb^1]$ the Lefschetz-Tate motive.

The main result of this note will imply that this localization is in fact necessary
to the result of \cite{BeBro}. We will show that graph hypersurfaces
do {\em not\/} generate the Grothendieck ring as a module over $\Zbb[\Lbb]$,
and in fact they do not even generate the localization ${S'}^{-1}K(\Var)$ as a 
module over ${S'}^{-1}\Zbb[\Lbb]$,
where ${S'}$ is generated by $\Lbb^n-1$ for $n>0$. For example, we will
show that the class of an elliptic curve is not in the span of the graph
hypersurfaces if coefficients are taken in ${S'}^{-1}\Zbb[\Lbb]$.
As $S^{-1}\Zbb[\Lbb]$
is the localization of this latter ring at~$\Lbb$, it appears that localization 
at~$\Lbb$ is crucial the result of \cite{BeBro}. In view of this observation, we
propose the following\medskip

\noindent
{\em Question.\/} Do graph hypersurfaces generate the localized Grothendieck
ring as a module over $\Zbb[\Lbb,\Lbb^{-1}]$?\medskip 

This situation illustrates a sharp dichotomy in the behavior of graph hypersurfaces
in the Grothendieck ring under the contrasting operations of inverting $\bL$
and setting $\bL$ to zero. Graph hypersurfaces are `as general as possible' 
after localization (at $\bL$ and $S'$), while they are extremely special with 
respect to taking the quotient modulo the ideal $(\Lbb)$: as we will show, their 
span agrees with the span of a point modulo~$(\Lbb)$.
\smallskip

We quickly recall some basic notation and terminology. For a connected finite graph
$G$ with $n$ edges the graph polynomial $\psi_G(t_1,\ldots,t_n)$ is defined as
$$ \psi_G(t_1,\ldots,t_n)= \sum_{T\subseteq G} \prod_{e\notin T} t_e, $$
where $T$ runs over the spanning trees of $G$ and $t_e$ is the variable associated
to an edge $e$. 
In general, we define the graph polynomial for a (finite) graph~$G$ to be the product 
of the polynomials for the connected components of $G$.
We denote by $X_G$ the projective hypersurface defined by
the homogeneous polynomial $\psi_G$ in $\P^{n-1}$, by $\widehat X_G \subset \A^n$
the affine hypersurface, and by $Y_G\subseteq \A^n$ the affine hypersurface 
complement $Y_G = \A^n \smallsetminus \widehat X_G$. 
As the main results of \cite{BeBro} are expressed in terms of $Y_G$, we 
choose to focus on $Y_G$ in this paper. We note that graph hypersurfaces
are usually singular; also, it is easy to see that the irreducible components
of $X_G$ are rational.\smallskip

Our main result can be stated as follows. Larsen and Lunts associate with
each variety $V$ (possibly singular, possibly non-compact) an element in
the monoid ring $\Zbb[SB]$ generated by {\em stable birational equivalence\/}
classes of varieties. This assignment is compatible with the relations 
defining $K(\Var)$, and associates with $V$ its own stable birational equivalence
class {\em if $V$ is smooth and projective.\/} Smooth projective rational varieties 
have class $1$ in~$\Zbb[SB]$, but note that the element in $\Zbb[SB]$ 
determined by the image of the class $[V]\in K(\Var)$ of a
quasi-projective or singular rational variety need not
be in the `constant' part $\Zbb\subseteq \Zbb[SB]$ in general 
(cf.~Example~\ref{Eexample}). Thus, although irreducible graph hypersurfaces
are rational, this fact alone does not give information on their image in $\Zbb[SB]$.
What we show is precisely that the Larsen-Lunts image of graph hypersurfaces 
{\em do\/} lie in the constant part of $\Zbb[SB]$.

\begin{thm}\label{SBmain}
Affine graph hypersurface complements span $\Zbb\subseteq \Zbb[SB]$.
\end{thm}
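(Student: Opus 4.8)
The plan is to prove the equivalent statement that the Larsen--Lunts image $\overline{[Y_G]}\in\Zbb[SB]$ of every affine graph hypersurface complement lies in the constant subring $\Zbb$. Since the map $K(\Var)\to\Zbb[SB]$ is reduction modulo the ideal $(\Lbb)$ --- in particular it sends $\Lbb\mapsto 0$ --- this is exactly the computation of $[Y_G]$ modulo $(\Lbb)$. Granting it, the theorem follows at once: every $\overline{[Y_G]}$ lies in $\Zbb$, so their span is contained in $\Zbb$, while the span contains $1$ because the one-vertex graph gives $Y_G$ a point with $\overline{[Y_G]}=1$ (equivalently the single-loop graph gives $Y_G=\A^1\smallsetminus\{0\}$, with $[Y_G]=\Lbb-1\mapsto -1$). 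Thus the entire content sits in the mod-$\Lbb$ computation.

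The engine of that computation is a deletion/contraction fibration. I would fix an edge $e$ and exploit that $\psi_G$ is multilinear in the edge variables, so that $\psi_G = t_e\,A + B$ with $A,B\in\Zbb[t_f : f\neq e]$, where $A$ is the coefficient of $t_e$. This coefficient equals $\psi_{G\smallsetminus e}$ when $e$ is not a bridge and vanishes identically when $e$ is a bridge. Projecting $\A^n\to\A^{n-1}$ by forgetting $t_e$ partitions the base into the locally closed loci $\{A\neq 0\}$, $\{A=0,\,B\neq 0\}$, and $\{A=0,\,B=0\}$, over which the fibre of $Y_G$ is respectively $\A^1$ minus a point (the graph of the regular function $-B/A$), all of $\A^1$, and the empty set. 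Each restriction is a genuinely trivial bundle after the scissor decomposition, so
$$[Y_G] = (\Lbb-1)\,[\{A\neq 0\}] + \Lbb\,[\{A=0,\,B\neq 0\}].$$
Reducing modulo $\Lbb$ collapses this to $\overline{[Y_G]} = -\,\overline{[\{A\neq 0\}]}$. When $e$ is not a bridge the locus $\{A\neq 0\}$ is exactly $Y_{G\smallsetminus e}\subseteq\A^{n-1}$, and when $e$ is a bridge it is empty; either way one obtains the clean recursion $\overline{[Y_G]} = -\,\overline{[Y_{G\smallsetminus e}]}$ (the bridge case reading $\overline{[Y_G]}=0$).

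I would then close by induction on the number of edges $n$. The base case $n=0$ gives $Y_G$ a point and $\overline{[Y_G]}=1$. For $n\geq 1$, if $G$ carries a non-bridge edge $e$ the recursion reduces $\overline{[Y_G]}$ to $\pm\,\overline{[Y_{G\smallsetminus e}]}$ with one fewer edge; if instead every edge is a bridge then $G$ is a forest, $\psi_G=1$, $Y_G=\A^n$, and $\overline{[Y_G]}=\overline{\Lbb^{\,n}}=0$. Hence $\overline{[Y_G]}\in\{-1,0,1\}\subseteq\Zbb$ in all cases, and disconnected graphs reduce to the connected case because $\psi_G$, hence $Y_G$, factors as a product over connected components. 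The step I expect to require the most care is the fibration identity itself: verifying that the coefficient $A$ really is $\psi_{G\smallsetminus e}$ for non-bridge edges (using the honest spanning-tree definition, which agrees with the product convention precisely because removing a non-bridge edge does not disconnect the graph), and that the three strata yield honestly trivial $\A^1$- and $(\A^1\smallsetminus\{0\})$-bundles so that the classes multiply in $K(\Var)$. Everything after this reduction is bookkeeping.
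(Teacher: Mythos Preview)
Your proposal is correct and follows essentially the same route as the paper. The paper proves the full deletion--contraction identity $[Y_G]=\Lbb\cdot[\Abb^{n-1}\smallsetminus Z]-[Y_{G\smallsetminus e}]$ (identifying $B=\psi_{G/e}$ as well) and then reduces modulo~$(\Lbb)$ to obtain exactly your recursion: a bridge forces $\overline{[Y_G]}=0$, and a non-bridge edge gives $\overline{[Y_G]}=-\,\overline{[Y_{G\smallsetminus e}]}$; your version is marginally more economical in that you never need to name the constant term $B$ or the locus $Z$, since that contribution dies modulo~$\Lbb$ anyway.
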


Morally, Theorem~\ref{SBmain} shows that graph hypersurfaces and 
their complements are rational in a very strong sense (which we will make
precise in~\S\ref{SBsec}): for example, the image in $\Zbb[SB]$ 
of the class of an irreducible graph hypersurface does equal the class 
of a point (Corollary~\ref{Lrat}).
This is the reason why they do not span the
unlocalized Grothendieck ring of varieties.

In more classical terms, Theorem~\ref{SBmain} provides some
information on the mixed Hodge structure of graph hypersurfaces, as
it shows that the Deligne-Hodge polynomial of a graph hypersurface is
necessarily of the form $c+uv P(u,v)$, with $c\in \Zbb$. This shows that 
elliptic curves are not in the span of graph hypersurfaces in the Grothendieck 
ring.

Theorem~\ref{SBmain} is proven by using the realization of $\Zbb[SB]$ 
as the quotient $K(\Var)/(\Lbb)$ (also recalled in \S\ref{SBsec}).
An explicit computation (Theorem~\ref{modLthm}) based on a 
deletion-contraction formula for
the Grothendieck class of a graph hypersurface shows that for every graph $G$,
the class of $Y_G$ modulo $(\Lbb)$ is $0$ or~$\pm 1$. Theorem~\ref{SBmain}
and the consequences mentioned above follow immediately. 

\section{Stable birational equivalence and the Grothendieck ring}\label{SBsec}

In the following, we denote by $K(\Var)$ the Grothendieck ring of
varieties. This is generated by the isomorphism classes of irreducible
quasi-projective varieties with the inclusion--exclusion relations 
$[X]=[X\smallsetminus Y]+[Y]$, for closed embeddings $Y\subseteq X$, 
and with the product $[X]\cdot [Y]= [X\times Y]$.
The Grothendieck ring $K(\Var)$ depends on the field 
of definition of the varieties.
This will be understood to be $\Qbb$ in the following.

A result of \cite{LaLu} relates the Grothendieck ring to the ring of 
stable birational equivalence classes of varieties. We comment here
briefly on some aspects of this result that will be useful in the discussion
of the case of graph hypersurfaces.

Two (irreducible, complex) varieties $X$, $Y$ are {\em stably birational} if
$X\times \Pbb^k$ is birational to $Y\times \Pbb^\ell$ for some $k,\ell\ge 0$. 
If $X$ is stably birational to $Y$, and $X'$ is stably birational
to $Y'$, then $X\times X'$ is 
stably birational to $Y\times Y'$. 
Thus, the set of classes of stable birational equivalence of varieties is a 
multiplicative monoid $SB$, with unit equal to the class of a point.

\begin{thm}\label{LaLuThm} {\rm (\cite{LaLu}, Proposition~2.7.)}
Let $(\bL)$ be the ideal in $K(\Var)$ generated by the Lefschetz motive
$\bL=[\A^1]$. The ring 
$K(\Var)/(\Lbb)$ is isomorphic to the monoid ring $\Zbb[SB]$. 
\end{thm}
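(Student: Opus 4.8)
The plan is to exhibit mutually inverse ring homomorphisms between $K(\Var)/(\Lbb)$ and $\Zbb[SB]$. The essential geometric input on both sides is the weak factorization theorem, together with Bittner's consequent presentation of the Grothendieck ring: over a field of characteristic zero, $K(\Var)$ is generated by the classes $[X]$ of smooth projective varieties, subject only to $[\emptyset]=0$ and the blow-up relations $[\Bl_Y X]-[E]=[X]-[Y]$, where $Y\subseteq X$ is a smooth closed subvariety of a smooth projective $X$ and $E$ is the exceptional divisor of the blow-up (itself smooth projective).

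First I would construct the forward map $\Phi\colon K(\Var)\to\Zbb[SB]$. On Bittner's generators, set $\Phi([X])=\langle X\rangle$, the stable birational class of the smooth projective variety $X$; multiplicativity of $\Phi$ is then the statement $\langle X\times X'\rangle=\langle X\rangle\langle X'\rangle$ built into the monoid $SB$. To see that $\Phi$ respects the blow-up relation, note that $\Bl_Y X\to X$ is an isomorphism away from $Y$, hence birational, so $\langle\Bl_Y X\rangle=\langle X\rangle$; and $E=\Pbb(N_{Y/X})$ is a Zariski-locally trivial $\Pbb^r$-bundle over $Y$ (with $r=\codim_X Y-1$), hence birational to $Y\times\Pbb^r$ and so $\langle E\rangle=\langle Y\rangle$. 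Thus both sides of the relation map to $\langle X\rangle-\langle Y\rangle$, and $\Phi$ is a well-defined ring homomorphism. Finally $\Lbb=[\Pbb^1]-[\mathrm{pt}]$ maps to $1-1=0$ since $\Pbb^1$ is rational, so $(\Lbb)\subseteq\Ker\Phi$ and $\Phi$ descends to $\bar\Phi\colon K(\Var)/(\Lbb)\to\Zbb[SB]$.

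Next I would construct the inverse $\Psi\colon\Zbb[SB]\to K(\Var)/(\Lbb)$ by sending the class $\langle X\rangle$ of a smooth projective $X$ to $[X]\bmod(\Lbb)$ and extending $\Zbb$-linearly and multiplicatively. The one substantial point is well-definedness: I must show that stably birational smooth projective varieties have equal class modulo $(\Lbb)$. The key computation is that for a blow-up as above $[E]=[Y]\cdot[\Pbb^r]=[Y]\cdot(1+\Lbb+\cdots+\Lbb^r)\equiv[Y]\pmod{(\Lbb)}$—using that a Zariski-locally trivial bundle satisfies $[E]=[Y][\Pbb^r]$ by cut-and-paste—so the blow-up relation gives $[\Bl_Y X]\equiv[X]\pmod{(\Lbb)}$. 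By weak factorization, any birational map between smooth projective varieties factors into such blow-ups and blow-downs, whence birational smooth projective varieties are congruent mod $(\Lbb)$. To pass from birational to stably birational, observe that $[X\times\Pbb^k]=[X]\cdot[\Pbb^k]\equiv[X]\pmod{(\Lbb)}$, so a stable birational equivalence $X\times\Pbb^k\sim X'\times\Pbb^\ell$ forces $[X]\equiv[X']$. This makes $\Psi$ well-defined.

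It then remains to check that $\bar\Phi$ and $\Psi$ are mutually inverse. The composite $\bar\Phi\circ\Psi$ fixes each generator $\langle X\rangle$ by construction, and $\Psi\circ\bar\Phi$ fixes each class $[X]\bmod(\Lbb)$ of a smooth projective $X$; since by Bittner these generate $K(\Var)/(\Lbb)$, both composites are the identity. I expect the main obstacle to be the well-definedness of $\Psi$, i.e. the reduction of stable birational invariance to the single blow-up computation—this is exactly the step where the weak factorization theorem is indispensable, and it is what forces the hypothesis of characteristic zero (here, working over $\Qbb$).
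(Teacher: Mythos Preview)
Your proposal is correct and follows essentially the same approach as the paper: the paper merely sketches the forward map $K(\Var)\to\Zbb[SB]$ via Bittner's presentation (noting that the blow-up relation is respected because $\Bl_Y X$ is birational to $X$ and $E$ is stably birational to $Y$, and that Bittner's result rests on weak factorization), whereas you additionally spell out the inverse and verify its well-definedness. The only detail you leave implicit is that every stable birational class in $SB$ is represented by a smooth projective variety (needed to define $\Psi$ on all of $SB$), but this is immediate from resolution of singularities in characteristic zero.
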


This result is obtained by defining a homomorphism $K(\Var) \to \Zbb[SB]$,
sending $[V]$ to the stable birational equivalence class $[V]_{SB}$
of $V$ for every
irreducible {\em smooth, projective\/} variety $V$.
The main technical step is to show that this homomorphism is
well-defined; this may be proven by using Bittner's alternative description
(\cite{Bitt}) of the Grothendieck ring of varieties $K(\Var)$ with generators that 
are smooth projective varieties and relations 
$$ [X] - [Y] = [B\ell_Y(X)]-[E], $$
for a smooth closed subvariety $Y\subseteq X$, with $B\ell_Y(X)$ the blowup of 
$X$ along~$Y$ and $E$ the exceptional divisor. This relation replaces the 
usual inclusion-exclusion relation $[X]=[X\smallsetminus Y]+[Y]$, which 
requires the noncompact $X\smallsetminus Y$.
Bittner's characterization depends on the {\em weak factorization theorem\/}
of \cite{AKMW}, which shows that any proper birational map between smooth 
irreducible varieties over a field of characteristic zero can be factored into a 
sequence of blow-ups and blow-downs with smooth centers.

Theorem~\ref{LaLuThm} is stated over $\Cbb$ in~\cite{LaLu}, but holds
over~$\Qbb$ as well since so does the weak factorization theorem
(Remark~2 after Theorem~0.3.1 of~\cite{AKMW}); this is also observed
explicitly in~\cite{Kollar}, p.~28.

\begin{rem}\label{LLmis} {\rm 
Stable birational equivalence makes sense for every variety~$V$, so every
variety (whether or not smooth and projective) has a class $[V]_{SB}$
in $\Zbb[SB]$.
It is important to keep in mind that in general this class agrees with the 
image of $[V]\in K(\Var)$ via the Larsen-Lunts homomorphism
{\em only if\/} $V$ is smooth and projective.
In other cases the image of $[V]$ in $\Zbb[SB]$ may be determined by
expressing $[V]$ as a combination of classes of smooth projective 
varieties, and then reproducing that combination in $\Zbb[SB]$.

For example, the image of $\Lbb=[\Abb^1]$ is $0$ in $\Zbb[SB]$
because $[\Abb^1]=[\Pbb^1]-[\Pbb^0]$ in $K(\Var)$, and $\Pbb^0$,
$\Pbb^1$ are trivially stably birationally equivalent to each other.
Likewise, the image of (the class of) an irreducible nodal plane 
cubic~$C$ in $\Zbb[SB]$ is~$0\ne 1$ even though  
$[C]_{SB}=[\Pbb^0]_{SB}$,
since $[C]=[\Pbb^1]-[\Pbb^0]$ in $K(\Var)$.
}\end{rem}

To make this point more explicit, we introduce a notion of 
`$\bL$-equivalence'. 

\begin{defn}\label{LLbirateq}
Two irreducible quasi-projective varieties $X$, $Y$ are 
$\bL$-equivalent if their classes in $\Zbb[SB]$ via the Larsen-Lunts
isomorphism coincide, that is,  if $[X]\equiv [Y] \mod (\Lbb)$ in $K(\Var)$.
A variety is $\bL$-rational if it is $\bL$-equivalent to $\Pbb^k$, for some 
$k\ge 0$.
\end{defn}

If $X$ and $Y$ are irreducible smooth projective and stably birational,
then they are also $\bL$-equivalent; however, this is not necessarily 
the case if $X$, $Y$ are not smooth and/or not complete. For example,
as observed above, 
an irreducible nodal cubic in $\Pbb^2$ is complete and birational to~$\Pbb^1$ 
but not $\bL$-rational.

In fact, the following example shows that rational (singular, projective)
varieties may be very far from being $\bL$-rational.

\begin{ex}\label{Eexample}{\rm
There exists a complete rational surface $X$ whose Larsen-Lunts
image in $\Zbb[SB]$ is $2-[C]_{SB}$, where $C$ is an elliptic curve.

Indeed, by Theorem~3.3 of~\cite{GreVi}, there exist projective rational
surfaces $X$ with one isolated singular point $p$ such that the exceptional
divisor in the minimal resolution $\widetilde X$ of $X$ is an elliptic curve $C$.
Using Bittner's relations, $[X]=[\widetilde X]-[C]+[p]$ in $K(\Var)$, and since 
all varieties on the right-hand side are smooth and projective, and 
$\widetilde X$ is rational, then the image of $[X]$ in $\Zbb[SB]$ equals 
$[\widetilde X]_{SB}-[C]_{SB}+[p]_{SB}=2-[C]_{SB}$.
}\end{ex}

These caveats apply to graph hypersurfaces. As recalled in
the introduction, irreducible (projective) graph hypersurfaces $X_G$
are easily seen to be rational, and are
complete, but are in general singular. Affine graph hypersurface 
complements $Y_G$ are trivially
rational, but non-complete. As observed above, these naive considerations
do not suffice to determine the Larsen-Lunts images of these varieties
in~$\Zbb[SB]$.

\section{Graph hypersurfaces and stable birational equivalence}

In \cite{AluMa4}, we proved a deletion-contraction formula for
the classes in the Grothendieck ring of the graph hypersurfaces. 
We recall here the result, for completeness, since the conclusion
we derive on the stable birational equivalence classes will be
a direct consequence of this formula.
We also note that the result is also implicit in the literature preceding
\cite{AluMa4}, e.g.~\cite{Ste},~\cite{BEK}.

\begin{thm}\label{delconthm} {\rm (\cite{AluMa4}, Theorem~3.8)}
Let $G$ be a graph with $n$ edges, and let $e$  be an edge of $G$. 
Denote by $G\smallsetminus e$ the graph obtained by removing $e$, 
and by $G/e$ the graph obtained by contracting $e$.
Let $Y_G$ denote the affine graph hypersurface complement in $\A^n$. 
\begin{itemize}
\item If $e$ is a bridge in $G$, then
$[Y_G]=\Lbb\cdot [Y_{G\smallsetminus e}]$;
\item If $e$ is a looping edge in $G$, then
$[Y_G]=(\Lbb-1)\cdot [Y_{G\smallsetminus e}]$;
\item If $e$ is a neither a bridge nor a looping edge in $G$, then
\[
[Y_G]=\Lbb\cdot [\Abb^{n-1}\smallsetminus Z]- [Y_{G\smallsetminus e}]\quad,
\]
where $Z$ is the intersection of the affine graph hypersurfaces for
$G\smallsetminus e$, $G/e$ in~$\Abb^{n-1}$.
\end{itemize}
\end{thm}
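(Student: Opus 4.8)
The plan is to reduce everything to the elementary behavior of the graph polynomial $\psi_G$ under deletion and contraction of the edge $e$, and then to read off the class of $Y_G$ from how $\psi_G$ depends on the coordinate $t_e$. Writing $\underline t=(t_{e'})_{e'\neq e}$ for the remaining $n-1$ coordinates and regarding $p_1:=\psi_{G\smallsetminus e}$ and $p_0:=\psi_{G/e}$ as polynomials in $\underline t$, the starting point is the identity
\[
\psi_G = t_e\,\psi_{G\smallsetminus e} + \psi_{G/e},
\]
valid when $e$ is neither a bridge nor a looping edge; this is proven by splitting the defining sum over spanning trees $T$ of $G$ according to whether $e\in T$ (the trees with $e\in T$ correspond to spanning trees of $G/e$ and contribute $\psi_{G/e}$) or $e\notin T$ (these correspond to spanning trees of $G\smallsetminus e$ and contribute $t_e\,\psi_{G\smallsetminus e}$). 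First I would record the two degenerate cases: if $e$ is a bridge, every spanning tree contains $e$, so $\psi_G=\psi_{G\smallsetminus e}=\psi_{G/e}$ is independent of $t_e$; if $e$ is a loop, no spanning tree contains $e$, so $\psi_G = t_e\,\psi_{G\smallsetminus e}$.

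The two degenerate cases are then immediate. When $e$ is a bridge, $\widehat X_G\subset\Abb^n$ is the cylinder $\widehat X_{G\smallsetminus e}\times\Abb^1$ over the $t_e$-axis, so $Y_G\cong Y_{G\smallsetminus e}\times\Abb^1$ and $[Y_G]=\Lbb\cdot[Y_{G\smallsetminus e}]$. When $e$ is a loop, $\widehat X_G = \{t_e=0\}\cup(\widehat X_{G\smallsetminus e}\times\Abb^1)$, whence $Y_G\cong Y_{G\smallsetminus e}\times(\Abb^1\smallsetminus\{0\})$ and $[Y_G]=(\Lbb-1)\cdot[Y_{G\smallsetminus e}]$.

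For the main case I would analyze the projection $\pi:Y_G\to\Abb^{n-1}$ forgetting $t_e$, stratifying the base by the vanishing of $p_1$ and $p_0$. Since $\psi_G=t_ep_1+p_0$ is affine-linear in $t_e$, the fiber of $\pi$ over $\underline t$ is: $\Abb^1$ minus one point when $p_1(\underline t)\neq0$; all of $\Abb^1$ when $p_1(\underline t)=0\neq p_0(\underline t)$; and empty when $p_1(\underline t)=p_0(\underline t)=0$. The three strata are exactly $Y_{G\smallsetminus e}=\{p_1\neq0\}$, then $\widehat X_{G\smallsetminus e}\smallsetminus Z$, and finally $Z=\widehat X_{G\smallsetminus e}\cap\widehat X_{G/e}=\{p_1=p_0=0\}$. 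Over the first stratum the point to be removed, $t_e=-p_0/p_1$, varies, but the translation $t_e\mapsto t_e+p_0/p_1$ is regular there, so the preimage is isomorphic to $Y_{G\smallsetminus e}\times(\Abb^1\smallsetminus\{0\})$; over the second stratum the preimage is a genuine product with $\Abb^1$; over $Z$ it is empty. The scissor relations in $K(\Var)$ then give
\[
[Y_G]=(\Lbb-1)\cdot[Y_{G\smallsetminus e}]+\Lbb\cdot[\widehat X_{G\smallsetminus e}\smallsetminus Z].
\]

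Finally I would rewrite this in the stated form. The decomposition $\Abb^{n-1}\smallsetminus Z = Y_{G\smallsetminus e}\sqcup(\widehat X_{G\smallsetminus e}\smallsetminus Z)$ gives $\Lbb\cdot[\Abb^{n-1}\smallsetminus Z]=\Lbb\cdot[Y_{G\smallsetminus e}]+\Lbb\cdot[\widehat X_{G\smallsetminus e}\smallsetminus Z]$; eliminating the term $\Lbb\cdot[\widehat X_{G\smallsetminus e}\smallsetminus Z]$ from the displayed expression for $[Y_G]$ yields $[Y_G]=\Lbb\cdot[\Abb^{n-1}\smallsetminus Z]-[Y_{G\smallsetminus e}]$, as claimed. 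The only genuinely nonformal point—and the step I expect to require the most care—is the trivialization over the open stratum $\{p_1\neq0\}$: one must check that the fiberwise affine change of coordinate by the rational function $p_0/p_1$ is an honest isomorphism of varieties there (it is, since $p_0/p_1$ is regular where $p_1\neq0$), so that the piecewise-constant fiber classes may legitimately be multiplied against the classes of the strata in the Grothendieck ring.
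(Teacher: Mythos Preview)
Your argument is correct and follows essentially the same route as the paper's own proof: both exploit the linear dependence $\psi_G=t_e\psi_{G\smallsetminus e}+\psi_{G/e}$, stratify $\Abb^{n-1}$ by the vanishing of $\psi_{G\smallsetminus e}$ and $\psi_{G/e}$, read off the fibers over each stratum, and arrive at $[Y_G]=(\Lbb-1)[Y_{G\smallsetminus e}]+\Lbb\cdot[\widehat X_{G\smallsetminus e}\smallsetminus Z]$ before rewriting. You supply more detail than the paper (the explicit bridge and loop cases, the regular change of variable trivializing the punctured-line bundle over $Y_{G\smallsetminus e}$, and the final algebraic manipulation), but the underlying decomposition and logic are identical.
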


\proof We give a quick proof for completeness; for more details, see \cite{AluMa4}.
Let $t_i$ be the variable corresponding to the $i$-th edge $e_i$, and assume that
$e=e_n$ is not a bridge or a looping edge. In terms of graph polynomials:
\[
P_G(t_1,\dots,t_n) = t_n P_{G\smallsetminus e}(t_1,\dots,t_{n-1})
+P_{G/ e}(t_1,\dots,t_{n-1})\quad.
\]
The complement $Y_G$ is the set of $n$-tuples $(t_1,\dots,t_n)$ such
that $P_G(t_1,\dots,t_n)$ does not vanish; thus, such that
\[
t_n P_{G\smallsetminus e}(t_1,\dots,t_{n-1})
\ne -P_{G/ e}(t_1,\dots,t_{n-1})\quad.
\]
Over $Y_{G\smallsetminus e}$ (that is, if $P_{G\smallsetminus e}\ne 0$),
this condition holds if and only if $t_n\ne -P_{G/e}/P_{G\smallsetminus e}$,
that is, for $t_n\in \Abb^1\smallsetminus \Abb^0$.
Over the rest of the complement of~$Z$ (that is, if $P_{G\smallsetminus e}
\ne 0$ but $P_{G/e}=0$), the condition is satisfied for all $t_n$, hence
for $t_n\in \Abb^1$.
Over $Z$ (that is, if $P_{G\smallsetminus e}=P_{G/e}=0$), the condition
is not satisfied for any choice of $t_n$. Therefore,
\[
[Y_G]=(\Lbb-1)\cdot [Y_{G\smallsetminus e}]+\Lbb\cdot 
[(\Abb^{n-1}\smallsetminus Z)\smallsetminus Y_{G\smallsetminus e}]
\quad,
\]
and this is equivalent to third equality stated above. The other cases are
analogous.
\endproof

The deletion-contraction formula yields the following computation
of the Larsen-Lunts image of $Y_G$ in $\Zbb[SB]\cong K(\Var)/(\Lbb)$.

\begin{thm}\label{modLthm}
\[
[Y_G]\equiv \begin{cases}
\quad 0\quad  \mod (\Lbb) \quad  \text{if $G$ has edges that are not looping edges} \\
(-1)^n  \mod (\Lbb) \quad  \text{if $G$ has $n\ge 0$ looping edges, and no
other edge.}
\end{cases}
\]
\end{thm}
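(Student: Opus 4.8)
The plan is to induct on the number $n$ of edges of $G$, feeding the three cases of the deletion-contraction formula (Theorem~\ref{delconthm}) through the quotient by $(\Lbb)$. The decisive simplification is that modulo $(\Lbb)$ the three multipliers collapse: $\Lbb\equiv 0$ while $\Lbb-1\equiv -1$. Thus a bridge immediately forces $[Y_G]\equiv 0$, whereas both a looping edge and a non-bridge non-loop edge yield the single recursion $[Y_G]\equiv -[Y_{G\smallsetminus e}]\pmod{(\Lbb)}$.

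For the base case $n=0$ the graph has no edges, so $\psi_G=1$, the affine hypersurface is empty, and $Y_G=\Abb^0$ is a point; hence $[Y_G]=1=(-1)^0$, matching the second alternative. In the inductive step I would split on whether $G$ has a non-loop edge. If every edge is a looping edge, I pick one, say $e$; then $G\smallsetminus e$ again consists of only $n-1$ looping edges, and the looping-edge case gives $[Y_G]=(\Lbb-1)[Y_{G\smallsetminus e}]\equiv -(-1)^{n-1}=(-1)^n$ by induction.

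If instead $G$ has a non-loop edge $e$, I aim to show $[Y_G]\equiv 0$. When $e$ is a bridge this is immediate, since $[Y_G]=\Lbb[Y_{G\smallsetminus e}]\equiv 0$. When $e$ is neither a bridge nor a loop, the third case of Theorem~\ref{delconthm} gives $[Y_G]\equiv -[Y_{G\smallsetminus e}]$, so it suffices to know $[Y_{G\smallsetminus e}]\equiv 0$---that is, that $G\smallsetminus e$ still contains a non-loop edge, placing it in the first alternative of the inductive hypothesis.

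This last point is the only genuinely combinatorial step, and it is where I expect the care to be needed: one must rule out the scenario in which removing $e$ leaves nothing but loops, as that would spuriously produce $(-1)^n$ instead of $0$. The observation itself is elementary: since $e$ is not a bridge, its two endpoints---distinct because $e$ is not a loop---remain in the same connected component of $G\smallsetminus e$ and are joined by a path there, and since traversing a looping edge never changes the current vertex, any path between distinct vertices must use at least one non-loop edge, which then survives in $G\smallsetminus e$. The argument is local to the component of $e$, so no connectivity hypothesis on $G$ is needed. This closes the induction; as a consistency check, one can confirm the second alternative directly, a single loop giving $[Y_G]=\Lbb-1\equiv -1$ and $n$ loops giving $[Y_G]=(\Lbb-1)^n\equiv(-1)^n$.
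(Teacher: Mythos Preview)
Your proof is correct and follows essentially the same approach as the paper: reduce the deletion--contraction formula modulo $(\Lbb)$ to obtain $[Y_G]\equiv 0$ for a bridge and $[Y_G]\equiv -[Y_{G\smallsetminus e}]$ otherwise, then iterate. The only difference is organizational: the paper phrases the non-loop case as ``remove all but one non-loop edge; the last one is a bridge,'' whereas you run an explicit induction and verify the contrapositive combinatorial fact (removing a non-bridge non-loop edge leaves a non-loop edge) via the path argument---this is the same observation, just spelled out more carefully.
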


\proof Reading the result of Theorem \ref{delconthm} above modulo $(\Lbb)$ gives:
\begin{itemize}
\item If $e$ is a bridge in $G$, then
$[Y_G]=0 \mod (\Lbb)$;
\item If $e$ is not a bridge $G$, then
$[Y_G]=- [Y_{G\smallsetminus e}] \mod (\Lbb)$.
\end{itemize}
If now $G$ has any edge that is not a looping edge,
then removing all but one such edge leaves a graph with a bridge, and
hence $[Y_G]\equiv 0\mod (\Lbb)$ in this case. If all $n$ edges of $G$ are looping
edges, then repeated application of the second formula shows that
$[Y_G]\equiv(-1)^n \cdot [Y_{\underline G}]\mod (\Lbb)$, where $\underline G$ is 
the graph obtained by removing all edges from $G$. Clearly $Y_{\underline G}
=\Abb^0$, so $[Y_G]\equiv(-1)^n \mod (\Lbb)$ in this case, as stated.
\endproof

In terms of projective graph hypersurfaces:

\begin{cor}\label{Lrat}
Let $G$ be a graph that is not a forest, and with at least one non-looping edge. 
Then the projective graph hypersurface $X_G$ is $\Lbb$-rational.
\end{cor}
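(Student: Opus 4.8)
The plan is to relate the class of the projective hypersurface $X_G$ to that of the affine complement $Y_G$, and then to invoke Theorem~\ref{modLthm}. The bridge between the two is the affine cone $\widehat X_G$, cut out in $\A^n$ by the homogeneous polynomial $\psi_G$. Since $G$ is not a forest, $\psi_G$ has positive degree $b_1(G)$, so it vanishes at the origin and $\widehat X_G$ is a genuine cone containing its vertex. First I would record the two elementary identities
\[
[Y_G] = \Lbb^n - [\widehat X_G], \qquad [\widehat X_G] = (\Lbb-1)\,[X_G] + 1 .
\]
The first is inclusion--exclusion for $Y_G = \A^n\smallsetminus\widehat X_G$. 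The second expresses $\widehat X_G$ as the disjoint union of its vertex $\{0\}$ and the punctured cone $\widehat X_G\smallsetminus\{0\}$, the latter being a $\mathbb{G}_m$-bundle over $X_G$, namely the complement of the zero section in the restriction to $X_G$ of the tautological line bundle on $\Pbb^{n-1}$.

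The one point requiring care---and the only step that is not pure bookkeeping---is this second identity. I would justify it by noting that $\widehat X_G\smallsetminus\{0\}\to X_G$ is Zariski-locally trivial with fiber $\mathbb{G}_m$, so its total space has class $(\Lbb-1)[X_G]$ in $K(\Var)$, while adding back the vertex contributes the class $1$ of a point. This is precisely where the hypothesis that $G$ is not a forest enters: were $\psi_G$ a nonzero constant (the forest case), $\widehat X_G$ would be empty rather than a cone through the origin, and the identity would break down. I expect this cone computation to be the main obstacle; once it is in place the remainder is formal.

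Combining the two identities yields $[Y_G] = \Lbb^n - (\Lbb-1)[X_G] - 1$. Reducing modulo $(\Lbb)$, and using $n\ge 1$ (which holds since the presence of a non-looping edge forces at least one edge), we have $\Lbb^n\equiv 0$ and $\Lbb-1\equiv -1$, so
\[
[Y_G] \equiv [X_G] - 1 \pmod{(\Lbb)} .
\]
Finally I would apply Theorem~\ref{modLthm}: because $G$ has at least one non-looping edge, $[Y_G]\equiv 0 \pmod{(\Lbb)}$, whence $[X_G]\equiv 1 \pmod{(\Lbb)}$. Since $[\Pbb^0]=1$, this says exactly that $X_G$ is $\bL$-equivalent to $\Pbb^0$, that is, $\bL$-rational, as claimed.
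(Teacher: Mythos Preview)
Your proof is correct and follows essentially the same route as the paper's: both relate $[Y_G]$ to $[X_G]$ via the $\mathbb{G}_m$-bundle structure of the punctured affine cone (the paper fibers $Y_G$ over $\Pbb^{n-1}\smallsetminus X_G$, you fiber $\widehat X_G\smallsetminus\{0\}$ over $X_G$, and these are the two halves of the same identity $\Lbb^n-1=(\Lbb-1)[\Pbb^{n-1}]$), then invoke Theorem~\ref{modLthm}. The only cosmetic difference is that you land on $[X_G]\equiv[\Pbb^0]$ while the paper writes $[X_G]\equiv[\Pbb^{n-1}]$, which of course coincide modulo~$(\Lbb)$.
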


\begin{proof}
Note that $G$ must have at least~$2$ edges.
Since $G$ is not a forest, $\widehat X_G$ is not empty; and by 
Theorem~\ref{modLthm}, since $G$ has non-looping edges, then the class 
$[Y_G]$ is in the ideal $(\Lbb)$. The affine complement 
$Y_G=\A^n\smallsetminus \widehat X_G$ fibers over the projective 
complement 
$\Pbb^{n-1}\smallsetminus X_G$, with fibers $\Abb^1\smallsetminus \Abb^0$. 
Therefore
\[
(\Lbb-1)\cdot [\Pbb^{n-1}\smallsetminus X_G] \in (\Lbb)\quad,
\]
and hence $[\Pbb^{n-1}\smallsetminus X_G]\in (\Lbb)$; thus
\[
[X_G]\equiv [\Pbb^{n-1}] \mod (\Lbb)\quad,
\]
showing that $X_G$ is $\Lbb$-equivalent to $\Pbb^{n-1}$.
\end{proof}

Again we remark that if $X_G$ is irreducible, then it is easily seen to be rational,
but examples such as 
Example~\ref{Eexample} show that this does not suffice in itself
to draw the conclusion stated in Corollary~\ref{Lrat}.\smallskip

In view of the result of \cite{LaLu} recalled in Theorem~\ref{LaLuThm} above, 
Theorem~\ref{modLthm} has the following immediate consequence:

\begin{cor}\label{LLcor}
Let $\cR$ be the subring of $K(\Var)$ spanned by the classes~$[Y_G]$.
Then the image of $\cR$ in $\Zbb[SB]$ via the Larsen-Lunts homomorphism
is the subring $\Zbb$ generated by the stable birational equivalence class 
of a point.
\end{cor}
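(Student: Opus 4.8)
The plan is to read off the image directly from the per-graph computation of Theorem~\ref{modLthm}, since the entire content of the statement is contained there. Write $\pi\colon K(\Var)\to\Zbb[SB]$ for the Larsen--Lunts homomorphism, which under Theorem~\ref{LaLuThm} is the quotient map $K(\Var)\to K(\Var)/(\Lbb)\cong\Zbb[SB]$. I would first observe that $\cR$ is genuinely a subring and not merely an additive subgroup: the collection $\{[Y_G]\}$ is already closed under products, since for the disjoint union $G_1\sqcup G_2$ one has $\psi_{G_1\sqcup G_2}=\psi_{G_1}\,\psi_{G_2}$ by the very definition of the graph polynomial, whence $Y_{G_1\sqcup G_2}=Y_{G_1}\times Y_{G_2}$ and $[Y_{G_1}]\cdot[Y_{G_2}]=[Y_{G_1\sqcup G_2}]$. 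Thus $\cR$ is the additive span of the classes $[Y_G]$, and since $\pi$ is additive, $\pi(\cR)$ is the subgroup of $\Zbb[SB]$ generated by the elements $\pi([Y_G])$.

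It then suffices to locate these generators. By Theorem~\ref{modLthm}, for every graph $G$ the class $[Y_G]$ reduces modulo $(\Lbb)$ to $0$ (when $G$ has an edge that is not a looping edge) or to $(-1)^n$ (when $G$ consists of $n\ge 0$ looping edges and nothing else); in either case $\pi([Y_G])\in\{0,\pm1\}\subseteq\Zbb\subseteq\Zbb[SB]$. Consequently every generator of $\pi(\cR)$ lies in the constant part $\Zbb$, and therefore $\pi(\cR)\subseteq\Zbb$.

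For the reverse inclusion I would exhibit the value $1$, which is enough because $\pi(\cR)$ is an additive subgroup: the edgeless graph has $Y=\Abb^0$ a point, so $\pi([Y])=1$ (and a single looping edge gives $\pi([Y])=-1$). Hence $\pi(\cR)$ contains $1$ and thus all integer multiples of it, giving $\pi(\cR)=\Zbb$ as claimed. I do not anticipate any genuine obstacle: once Theorem~\ref{modLthm} is granted the corollary is purely formal, the only point meriting a line of care being the verification that $\cR$ carries a ring structure compatible with passing to generators, which the disjoint-union identity above settles.
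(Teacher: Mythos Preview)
Your argument is correct and follows the same route as the paper's proof: both deduce the corollary directly from Theorem~\ref{modLthm} by observing that every generator $[Y_G]$ maps to $0$ or $\pm 1$ modulo $(\Lbb)$. You are more explicit than the paper on two points---the multiplicative closure of the $[Y_G]$ under disjoint union (which the paper records separately in the Remark following the corollary) and the reverse inclusion $\Zbb\subseteq\pi(\cR)$---but these are elaborations of the same idea rather than a different approach.
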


\begin{proof}
The image of $\cR$ in $\Zbb[SB]$ is the quotient $\cR/(\Lbb)$. 
By Theorem~\ref{modLthm}, $\cR/(\Lbb)\cong \Zbb$.
\end{proof}

\begin{rem}{\rm
With our conventions, the product of two classes $[Y_{G_1}]$, $[Y_{G_2}]$
is itself the class $[Y_{G_1\amalg G_2}]$ of the affine complement of
a graph hypersurface (the class of the affine hypersurface complement is
a `motivic Feynman rule', see Proposition~2.5~in~\cite{AluMa2}). Further,
the Lefschetz motive $\Lbb$ equals $[Y_G]$ for the graph $G$ consisting
of a single edge joining two distinct vertices. Thus, the ring $\cR$ generated
by the classes $[Y_G]$ agrees with the $\Zbb[\Lbb]$-module generated
by the classes $[Y_G]$. Therefore, the following immediate consequence
of Corollary~\ref{LLcor} formalizes the first `non-spanning' result mentioned
in the Introduction.
}\end{rem}

\begin{thm}\label{notspanK}
The classes $[Y_G]$ of the affine graph hypersurface complements do not
span the Grothendieck ring $K(\Var)$ over $\Zbb[\Lbb]$.
\end{thm}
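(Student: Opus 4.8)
The plan is to reduce the statement to Corollary~\ref{LLcor}, combined with the identification recorded in the preceding remark of the subring $\cR$ with the $\Zbb[\Lbb]$-module generated by the classes $[Y_G]$. Since $\Lbb=[Y_G]$ for the single-edge graph and since a product of graph hypersurface complements is again a graph hypersurface complement, the $\Zbb[\Lbb]$-submodule of $K(\Var)$ spanned by the $[Y_G]$ coincides with $\cR$. Hence the assertion that the $[Y_G]$ span $K(\Var)$ over $\Zbb[\Lbb]$ is precisely the assertion that $\cR=K(\Var)$, and it suffices to exhibit a single class in $K(\Var)$ that does not lie in $\cR$.

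To produce such a witness I would apply the Larsen--Lunts homomorphism $K(\Var)\to\Zbb[SB]$ of Theorem~\ref{LaLuThm}. By Corollary~\ref{LLcor}, the image of $\cR$ under this homomorphism is contained in the constant part $\Zbb\subseteq\Zbb[SB]$, so it is enough to find a variety whose image lands outside $\Zbb$. The natural candidate is an elliptic curve $C$: being smooth and projective, its image under the Larsen--Lunts map is its own stable birational class $[C]_{SB}$ (cf.~Remark~\ref{LLmis}), which is a basis element of the monoid ring $\Zbb[SB]$.

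The key point, and the only content beyond formal manipulation, is that $[C]_{SB}\neq 1$, i.e.\ that $C$ is not stably rational. This follows because the dimension of the space of global holomorphic one-forms is a stable birational invariant of smooth projective varieties: one has $h^{1,0}(C)=1$ for the genus-one curve $C$, whereas by the K\"unneth formula $h^{1,0}(C\times\Pbb^k)=h^{1,0}(C)=1\neq 0=h^{1,0}(\Pbb^\ell)$, so no product $C\times\Pbb^k$ can be birational to a projective space. Consequently $[C]_{SB}$ is a monoid element distinct from the unit, and hence $[C]_{SB}\notin\Zbb$.

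Combining these observations finishes the argument: if $[C]$ were in $\cR$, then its Larsen--Lunts image would lie in $\Zbb$ by Corollary~\ref{LLcor}, contradicting $[C]_{SB}\notin\Zbb$. Therefore $[C]\notin\cR$, so $\cR\neq K(\Var)$, and the classes $[Y_G]$ do not span $K(\Var)$ over $\Zbb[\Lbb]$. I expect the stable-rationality input for the elliptic curve to be the only genuinely nontrivial step; everything else is bookkeeping with the already-established isomorphism $K(\Var)/(\Lbb)\cong\Zbb[SB]$ and with the remark identifying $\cR$ as the relevant $\Zbb[\Lbb]$-module.
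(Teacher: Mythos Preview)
Your proposal is correct and follows essentially the same route as the paper: reduce to Corollary~\ref{LLcor} via the preceding remark, and then observe that $\Zbb[SB]\neq\Zbb$ by exhibiting a stable-birational invariant (a Hodge number) distinguishing some smooth projective variety from a point. The paper's proof is terser---it simply cites the existence of nonconstant stable-birational Hodge-type invariants from \cite{LaLu} rather than spelling out the elliptic-curve witness---but your explicit use of $h^{1,0}(C)$ is exactly the mechanism behind that citation, and the paper itself returns to the elliptic-curve example after Corollary~\ref{hodgecor}.
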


\proof In view of Corollary~\ref{LLcor}, it suffices to notice that $\Zbb[SB]\neq \Zbb$.
This is clear since there are polynomial
invariants of smooth projective varieties that are invariant under stable
birational equivalence, and are not always constant. One example is
given in \cite{LaLu}, Definition~3.4 in terms of Hodge polynomials.
\endproof

As mentioned in the Introduction, this observation can be sharpened, in a way 
that relates well to the result of \cite{BeBro}.

\begin{cor}\label{BelBrocor}
Let $S'$ be the saturated multiplicative subset of $\Zbb[\Lbb]$ generated by
the elements $\Lbb^n-1$, for $n>0$. Then the classes $[Y_G]$ do not
generate ${S'}^{-1} K(\Var)$ over ${S'}^{-1}\Zbb[\Lbb]$.
\end{cor}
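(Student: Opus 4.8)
The plan is to leverage Corollary~\ref{LLcor}, which identifies the image of the ring $\cR$ under the Larsen-Lunts homomorphism as the constant copy of $\Zbb$. The key obstacle is that Corollary~\ref{LLcor} concerns the unlocalized situation, whereas here we must argue about modules over the localized ring ${S'}^{-1}\Zbb[\Lbb]$. So I need to verify that the Larsen-Lunts homomorphism descends to the localization, and that it remains nontrivial there. The essential observation is that every element of $S'$ maps to a nonzero element of $\Zbb[SB]$: indeed, modulo $(\Lbb)$ we have $\Lbb\equiv 0$, so $\Lbb^n-1\equiv -1$ for every $n>0$. Thus each generator of $S'$ reduces to $\pm 1$, a unit in $\Zbb[SB]$, and hence the composite $\Zbb[\Lbb]\to K(\Var)\to\Zbb[SB]$ sends all of $S'$ into the units of $\Zbb[SB]$.

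Given this, the universal property of localization furnishes a ring homomorphism
\[
{S'}^{-1}\Zbb[\Lbb]\longrightarrow \Zbb[SB],
\]
and more generally the Larsen-Lunts map extends to a homomorphism
\[
\Phi\colon {S'}^{-1}K(\Var)\longrightarrow \Zbb[SB]
\]
of modules (in fact of rings), compatible with the module structures. Under $\Phi$, the image of the ${S'}^{-1}\Zbb[\Lbb]$-submodule generated by the classes $[Y_G]$ is contained in the ${S'}^{-1}\Zbb[\Lbb]$-submodule of $\Zbb[SB]$ generated by the images of the $[Y_G]$. By Theorem~\ref{modLthm} those images lie in $\Zbb\subseteq\Zbb[SB]$, and since $S'$ acts on $\Zbb[SB]$ through $\pm 1$, that submodule is exactly $\Zbb$ again. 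Hence the image under $\Phi$ of the span of the $[Y_G]$ is contained in the constant copy of $\Zbb$.

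To conclude, I would exhibit an element of ${S'}^{-1}K(\Var)$ whose image under $\Phi$ escapes $\Zbb$, so that the $[Y_G]$ cannot generate. The class of an elliptic curve $C$ does the job: $C$ is smooth and projective, so its Larsen-Lunts image is its honest stable birational equivalence class $[C]_{SB}$, which is not constant (as used in the proof of Theorem~\ref{notspanK}, via the Hodge-polynomial invariant of \cite{LaLu}). Since $[C]$ survives to ${S'}^{-1}K(\Var)$ and $\Phi([C])=[C]_{SB}\notin\Zbb$, the class $[C]$ is not in the ${S'}^{-1}\Zbb[\Lbb]$-span of the $[Y_G]$, which proves the corollary. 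The main subtlety to handle carefully is the compatibility of the module structures under localization — ensuring that the extended map $\Phi$ really does carry the localized span into $\Zbb$ rather than into a larger ${S'}^{-1}\Zbb[\Lbb]$-submodule — but this is controlled precisely by the computation $\Lbb^n-1\equiv -1\bmod(\Lbb)$, which collapses the localization on the target.
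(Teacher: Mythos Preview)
Your argument is correct and is essentially the same as the paper's: both hinge on the observation that each $\Lbb^n-1$ becomes a unit modulo $(\Lbb)$, so the Larsen--Lunts quotient map $K(\Var)\to K(\Var)/(\Lbb)\cong\Zbb[SB]$ extends over the localization at $S'$, after which the fact that the $[Y_G]$ land in $\Zbb\subsetneq\Zbb[SB]$ finishes the proof. The paper phrases this as ``localization commutes with taking quotients'' and invokes Theorem~\ref{notspanK} rather than naming the elliptic curve explicitly, but the content is identical.
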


\begin{proof}
Localization commutes with taking quotients: the quotient 
$$({S'}^{-1} K(\Var))/(\Lbb)$$
equals the localization ${S'}^{-1}(K(\Var)/(\Lbb))$. 
Since all elements of $S'$ are invertible 
modulo $(\Lbb)$, this latter equals $K(\Var)/(\Lbb)$; and the action of $\Lbb$
on this module is $0$. Since the classes $[Y_G]$ do not span this quotient,
they cannot span ${S'}^{-1} K(\Var)$ over ${S'}^{-1} \Zbb[\Lbb]$.
\end{proof}

Corollary~\ref{BelBrocor} should be compared with Theorem~0.6 in
\cite{BeBro}, which states that the classes $[Y_G]$ {\em do\/} generate
the localization $S^{-1} K(\Var_\Zbb)$ over $S^{-1}\Zbb[\Lbb]$, where $S$ 
is the multiplicative system generated by $\Lbb^n-\Lbb$ for $n>1$. 
{\em A fortiori,\/} the classes~$[Y_G]$ generate $S^{-1} K(\Var_\Qbb)$. 
Localizing at $S$ amounts to localizing at $S'$ and at $\Lbb$; 
Corollary~\ref{BelBrocor} shows that the localization at~$\Lbb$ is crucial to 
the mentioned result in~\cite{BeBro}. This suggests that the classes $[Y_G]$
may possibly span the Grothendieck ring over the simpler 
localization $\Zbb[\Lbb^{-1},\Lbb]$.\smallskip

We end by observing that the results stated above have a straightforward
Hodge-theoretic formulation.
Every smooth complex projective variety $X$ carries Hodge numbers 
$h^{p,q}(X)$. The {\em Hodge polynomial\/} of $X$ is the polynomial 
$\sum_{p,q} (-1)^{p+q} h^{p,q}(X)u^pv^q$. Now, the Hodge
polynomial determines a ring homomorphism $K(\Var) \to \Zbb[u,v]$,
see e.g.~\S2.11 in \cite{Veys}; this homomorphism maps $\Lbb$
to $uv$. Indeed, the Hodge polynomial may be consistently 
defined for all varieties and satisfies the relations in $K(\Var)$, 
as observed in~\cite{DK}. This (`Deligne-Hodge') polynomial of
an arbitrary complex variety $X$ records information about the mixed 
Hodge structure on the cohomology $H^k_c(X,\Qbb)$ of $X$ with
compact support: it may be defined as $\sum_{p,q} e^{p,q}(X)
u^p v^q$, where
\[
e^{p,q}(X)=\sum_k (-1)^k h^{p,q} (H_c^k(X,\Qbb))\quad.
\]

\begin{cor}\label{hodgecor}
Let $X$ be any complex projective variety whose class $[X]$ is in the
subring $\cR$ of $K(\Var_\C)$ generated by the classes $[Y_G]$, as $G$ ranges
over all graphs. Then the Deligne-Hodge polynomial of $X$ is of the form
\begin{equation}\label{Hodgepoly}
c+uvP(u,v)\quad,
\end{equation}
with $c\in \Zbb$.
\end{cor}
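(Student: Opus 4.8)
The plan is to use that the Deligne-Hodge polynomial is the value of a ring homomorphism $h\colon K(\Var_\C)\to\Zbb[u,v]$ with $h(\Lbb)=uv$, and to feed into it the information modulo $(\Lbb)$ supplied by Corollary~\ref{LLcor}. The desired shape $c+uvP(u,v)$ is exactly the condition that the polynomial reduce to the constant $c$ modulo the ideal $(uv)$, and $(uv)$ is precisely the image $h((\Lbb))$; so the statement should reduce to a one-line computation.

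First I would observe that, since $[X]\in\cR$, Corollary~\ref{LLcor} places the image of $[X]$ in $\Zbb[SB]\cong K(\Var_\C)/(\Lbb)$ inside the constant subring $\Zbb$. Hence there is an integer $c$ with $[X]\equiv c\pmod{(\Lbb)}$, that is $[X]-c=\Lbb\cdot\xi$ for some $\xi\in K(\Var_\C)$. Applying $h$, and using $h(\Lbb)=uv$ together with the fact that $h$ fixes the integers, I obtain
\[
E_X(u,v)=h([X])=c+uv\cdot h(\xi),
\]
so that $P(u,v):=h(\xi)\in\Zbb[u,v]$ exhibits the asserted form, with $c\in\Zbb$. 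Here $E_X=h([X])$ is the Deligne-Hodge polynomial by construction.

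I expect no genuine obstacle, only two formal checks. One is the inclusion $h((\Lbb))\subseteq(uv)$, which is immediate from $h(\Lbb)=uv$. The other is that the integer $c$ produced by Corollary~\ref{LLcor} agrees with the one surviving after applying $h$; this holds because both the Larsen-Lunts homomorphism and $h$ send the class of a point to the unit $1$, so the two copies of $\Zbb$ are matched compatibly. With these in place the identification of the constant term is automatic, and no further geometry enters.
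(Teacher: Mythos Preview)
Your argument is correct and is essentially the same as the paper's: both exploit that the Hodge polynomial homomorphism sends $\Lbb$ to $uv$, hence descends to a map $K(\Var)/(\Lbb)\to\Zbb[u,v]/(uv)$, and then invoke Corollary~\ref{LLcor} to conclude that the image of $[X]$ modulo $(uv)$ is an integer. Your version simply unwinds this explicitly by writing $[X]-c=\Lbb\cdot\xi$ and applying $h$, whereas the paper states the descent and the image computation in two sentences.
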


\begin{proof}
The Hodge polynomial induces a homomorphism
$$ K(\Var)/(\Lbb) \to \Zbb[u,v]/(uv). $$ The image of $\cR/(\Lbb)\cong \Zbb$ in
$\Zbb[u,v]/(uv)$ is $\Zbb$, and the statement follows.
\end{proof}

For example, $h^{1,0}(X)\ne 0$ for e.g., a smooth elliptic curve. Therefore,
Corollary~\ref{hodgecor} shows that classes of elliptic curves are not in 
the span of the classes $[Y_G]$.

\end{document}